\DeclareSymbolFont{cyrletters}{OT2}{wncyr}{m}{n}
\DeclareMathSymbol{\Sha}{\mathalpha}{cyrletters}{"58}
\newcommand{\Z}{\ensuremath{{\mathbb{Z}}}\xspace}
\newcommand{\Q}{\ensuremath{{\mathbb{Q}}}}
\newcommand\bq{\begin{equation}}
\newcommand\eq{\end{equation}}
\newtheorem{proposition}{Proposition}[section]
\newtheorem{theorem}[proposition]{Theorem}
\newtheorem{corollary}[proposition]{Corollary}
\newtheorem{example}[proposition]{Example}
\newtheorem{conjecture}[proposition]{Conjecture}
\theoremstyle{remark}
\newtheorem{nts}{Note to self}
\title{The $2$-Class Tower of $\mathbb{Q}(\sqrt{-5460})$}
\author{Nigel Boston}
\address{Department of Mathematics\\
University of Wisconsin-Madison \\ 480 Lincoln Drive \\
Madison, WI 53706 USA}
\email{boston@math.wisc.edu}
\author{Jiuya Wang}
\address{Department of Mathematics\\
University of Wisconsin-Madison \\ 480 Lincoln Drive \\
Madison, WI 53706 USA}
\email{jiuyawang@math.wisc.edu}
\begin{document}

\begin{abstract}
The seminal papers in the field of root-discriminant bounds are those of Odlyzko and Martinet. Both papers include the question of whether the field
$\mathbb{Q}(\sqrt{-5460})$ has finite or infinite $2$-class tower. This is a critical case that will either substantially lower the best known upper bound for lim inf of root-discriminants (if infinite) or else give a counter-example to what is often termed Martinet's conjecture or question (if finite). Using extensive computation and introducing some new techniques, we give strong evidence that the tower is in fact finite, establishing other properties of its Galois group en route.
\end{abstract}

\maketitle

\section{Introduction}
If $K$ is a number field, then its root discriminant $rd(K)$ is an important invariant defined to be the $[K:\Q]$th root of the absolute value of its discriminant. In this paper we are concerned with the multi-set of real numbers $rd(K)$ as $K$ varies. Its smallest values are easily computed to be  $1, \sqrt{3}, 2, \sqrt{5}, ...$ (with multiplicity one). For large enough $C$ there are infinitely many number fields with $rd(K) \le C$. The main question is to find the smallest such $C$, in other words $C_0:=$ lim inf $rd(K)$. It has long been known [18] that $C_0 \geq 4\pi e^\gamma$ and that under GRH $C_0 \geq 8\pi e^\gamma \approx 44.76$. As for upper bounds, the best obtained so far [10] gives $C_0 < 82.2$. 

Upper bounds are typically obtained as follows. If $L/K$ is an unramified extension, then $rd(L)=rd(K)$. It follows that if $K$ has infinitely many unramified extensions, or equivalently if the Galois group of its maximal unramified extension is infinite, then $C_0 \leq rd(K)$. In practice it is hard to compute this Galois group for a given $K$. We can usually say much more about the Galois group of the maximal unramified $p$-extension of $K$, meaning the compositum of all unramified Galois extensions of $p$-power degree, for some prime $p$. This extension is also called the \it{$p$-class tower}\rm \ of $K$ and its Galois group will be termed the \it{$p$-tower group}\rm. A few recent papers [5], [6], [17], have studied the $2$-tower groups of several quadratic fields with root discriminant less than $82.2$ in attempts to improve upon the upper bound for $C_0$, but in each case so far the group has turned out to be finite. One of the few unresolved instances remaining is that of $\Q(\sqrt{-5460})$, a case that was mentioned as important all the way back in Odlyzko's seminal paper [19]. It is the smallest imaginary quadratic field whose $2$-class group has rank $4$, which implies that the $2$-tower group is a pro-$2$ group with $4$ generators. This considerably increases the complexity of the problem of determining whether this group is finite or infinite, and the current paper is the first serious attempt to address these complications. If its $2$-class tower turns out to be infinite, then the known upper bound for $C_0$ will be lowered to $\sqrt{5460} \approx 73.89$.

In [15], Martinet noted that imaginary quadratic fields with $2$-class group of rank $>4$ have infinite $2$-class tower by Golod-Shafarevich [9]. He raised the question of whether those of rank equal to $4$ always have infinite $2$-class tower. This has inspired a lot of recent work. Benjamin [1],[2],[3],  Sueyoshi [23],[24],[25], Mouhib [16], and Wang [26] have established that the tower is infinite in many cases, most of which have nonzero $4$-rank. The case of zero $4$-rank, which includes that of $\Q(\sqrt{-5460})$, is the most challenging, and the case at hand is as yet still open.

In fact, below, we give evidence that the $2$-class tower of $K = \Q(\sqrt{-5460})$ is finite (but very large). This runs contrary to what is usually conjectured [15],[26]. We show that its Galois group $G_K$ has $4$ generators and exactly $5$ relators and compute information on its maximal quotient $Q_c(G_K)$ of nilpotency class $c = 2, 3$. In [15], Martinet noted that he knew of no finite $2$-group with $4$ generators and $5$ relators. Our method yields millions of such groups. It is seen that almost all of our candidates for $G_K$ are finite and that often, for a given $c$ and candidate for $Q_c(G_K)$, all corresponding candidates for $G_K$ are finite. 

The key empirical observation is that for each candidate for $Q_3(G_K)$ all its descendants $G$ have the property that $G^8$ is abelian and of finite, bounded index. Once checked in its entirety, this will imply that $G_K$ is finite.

We make extensive use of the computer algebra systems Magma [4] and Pari Gp.

\section{$2$-Tower Groups of Imaginary Quadratic Fields}
Let $K$ be an imaginary quadratic field and let $G$ be its $2$-tower group. Then $G$ is a (topologically) finitely presented pro-$2$ group. Let $d(G)$ and $r(G)$ denote its generator and relator rank respectively. By Burnside's basis theorem, $d(G) = d(G^{ab})$, where $G^{ab}$ is the maximal abelian quotient of $G$, which here is isomorphic to the $2$-class group of $K$. As for the relator rank, by [21], $r(G)$ is known to equal $d(G)$ or $d(G)+1$. 

Moreover, by Golod-Shafarevich [9], if $G$ is finite, then $r(G) > d(G)^2/4$. Combining the last two statements gives that if $d(G) \geq 5$, then $G$ is infinite.

Let $K$ denote $\Q(\sqrt{-5460})$ and $G_K$ its $2$-tower group. This paper will provide lengthy information regarding the structure of $G_K$. The first such information is that $G_K^{ab} \cong (\Z/2)^4$, which we denote $[2,2,2,2]$. (In general, the group $\Z/2^{k_1} \times ... \times \Z/2^{k_d}$ will be denoted $[2^{k_1},...,2^{k_d}]$ for short). It follows that $d(G_K) = 4$ and so $r(G_K) = 4$ or $5$. There are known to exist finite $2$-groups with $d(G)=4$ and $r(G)=5$ (the smallest [11] has order $2^{14}$ - our method below yields millions of $4$-generator $5$-relator $2$-groups). 

\section{Ingredients regarding $p$-Group Theory}
If $G$ is a finite $p$-group, then its \it{$p$-central series of subgroups}\rm \ is defined by $P_0(G)=G, P_{n+1}(G)=[G,P_n(G)]P_n(G)^p (n \geq 0)$. If $G$ is a finitely generated pro-$p$ group, then we take the closed subgroups generated by these expressions. In particular, $P_1(G)$ is the Frattini subgroup of $G$. If $P_c(G)=\{1\}$, but $P_{c-1}(G) \ne \{1\}$, then we say that $G$ has $p$-class $c$. The unique (up to isomorphism) $p$-group with $d$ generators and $p$-class $1$ is the elementary abelian $p$-group $(\Z/p)^d$. The quotient $Q_c(G):= G/P_c(G)$ is the maximal quotient of $G$ of $p$-class $\leq c$ and is finite. If $G$ has abelianization of exponent $p$ (as is the case for the Galois group of interest to us), then in fact all the factors in its lower central series have exponent $p$, and so its $p$-central series coincides with its lower central series and its $p$-class is the same as its nilpotency class [20, section 5.2.5].

Given $p$ and $d$, the set of (isomorphism classes of) finite $d$-generator $p$-groups forms a rooted tree with root $(\Z/p)^d$. An edge joins $G$ to $H$ if $G$ has $p$-class $c$ and $H \cong Q_{c-1}(G)$. Thus, the vertices at length $k$ from the root are precisely the groups of $p$-class $k+1$. O'Brien has provided an algorithm [18] that, given a $p$-group $G$ of $p$-class $c$, yields all its neighbors of $p$-class $c+1$. These are called the \it{immediate descendants}\rm \ (or \it{children}\rm) of $G$. Some groups (such as the quaternion group of order $8$) have no descendants and are called \it{terminal}\rm. An equivalent characterization is that their nuclear rank is $0$. The nuclear rank (defined below) of a group is loosely correlated with how many children it has.

Let $G$ be a finite $d$-generator $p$-group of $p$-class $c$. It can be presented as $F/R$, where $F$ is a free group on $d$ generators. Let $R^\ast = [R,F]R^p$, a characteristic subgroup of $R$. The group $G^\ast:= F/R^\ast$ is a finite group called a $p$-covering group of $G$ and the quotient $R/R^\ast$ is the $p$-multiplicator, a finite elementary abelian group. The \it{$p$-multiplicator rank}\rm \ of $G$ is the rank of $R/R^\ast$. The nucleus of $G$ is $P_c(G^\ast) = P_c(F^\ast)R^\ast/R^\ast \leq R/R^\ast$, and its rank defines the \it{nuclear rank}\rm \ of $G$. These are all important players in O'Brien's algorithm. In particular, every child of $G$ arises as a quotient of $G^\ast$. His algorithm sorts out isomorphism classes of such quotients 
$F/M$, where $M/R^\ast$ is a proper subgroup of the $p$-multiplicator that supplements the nucleus.

In [5], we note the following useful proposition:

\begin{proposition}
If $G$ is a finite $p$-group, then for any $c$
$$ \text{p-multiplicator rank}(Q_c(G)) - \text{nuclear rank}(Q_c(G)) \leq r(G) $$
\end{proposition}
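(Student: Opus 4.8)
The plan is to compare minimal presentations of $G$ and $Q := Q_c(G)$ over one common free group and reduce the inequality to a single surjection of $\F_p$-vector spaces. We may assume $c$ is at most the $p$-class of $G$ (otherwise $Q = G$ and the inequality is immediate, the nuclear rank being nonnegative), so that $Q$ has $p$-class exactly $c$. Present $G = F/R$ with $F$ free (pro-$p$) on $d = d(G)$ generators and $R$ the relation subgroup; then $R^\ast = [R,F]R^p$, and by the standard identification of the relator rank with the minimal number of normal generators of $R$ one has $r(G) = \dim_{\F_p}(R/R^\ast)$. Since $P_c(G) \leq P_1(G) = \Phi(G)$, the groups $G$ and $Q$ share a Frattini quotient, so $d(Q) = d$ and $Q = F/S$ over the \emph{same} $F$, where $S$ is the preimage of $P_c(G)$. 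The key structural identity I would record is $S = P_c(F)R$, which follows from the compatibility of the $p$-central series with quotients, $P_c(F/R) = P_c(F)R/R$ (a short induction on $c$).

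Next I would rewrite the left-hand side as one dimension. The $p$-multiplicator of $Q$ is $S/S^\ast$ with $S^\ast = [S,F]S^p$, and the nucleus of $Q$ is $P_c(F)S^\ast/S^\ast$, a subspace of $S/S^\ast$ because $P_c(F) \subseteq S$. As both are elementary abelian, the difference of the two ranks is
$$\dim_{\F_p}(S/S^\ast) - \dim_{\F_p}\big(P_c(F)S^\ast/S^\ast\big) \;=\; \dim_{\F_p}\big(S/P_c(F)S^\ast\big),$$
so the goal reduces to proving $\dim_{\F_p}\big(S/P_c(F)S^\ast\big) \leq \dim_{\F_p}(R/R^\ast) = r(G)$.

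I would then exhibit a surjection $R/R^\ast \twoheadrightarrow S/P_c(F)S^\ast$. The composite $R \hookrightarrow S \twoheadrightarrow S/P_c(F)S^\ast$ is surjective because $S = P_c(F)R$ makes every $s \in S$ congruent to some $r \in R$ modulo $P_c(F)$; and it factors through $R/R^\ast$ because $R \subseteq S$ yields $[R,F] \subseteq [S,F] \subseteq S^\ast$ and $R^p \subseteq S^p \subseteq S^\ast$, hence $R^\ast \subseteq S^\ast \subseteq P_c(F)S^\ast$. Since a surjection of $\F_p$-vector spaces cannot increase dimension, $\dim_{\F_p}\big(S/P_c(F)S^\ast\big) \leq \dim_{\F_p}(R/R^\ast) = r(G)$, as required.

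The individual verifications are routine; the content I expect to be the crux is the decision to compare everything over one fixed free group $F$ --- so that $R \subseteq S$ and the characteristic subgroups $R^\ast \subseteq S^\ast$ are nested --- and then the recognition that the \emph{difference} of the multiplicator and nuclear ranks collapses to the single quotient $\dim_{\F_p}(S/P_c(F)S^\ast)$, onto which $R/R^\ast$ visibly surjects. The remaining care-points are the quotient-compatibility $P_c(F/R) = P_c(F)R/R$ and the identification $r(G) = \dim_{\F_p}(R/R^\ast)$, the latter requiring the presentation to be minimal (so that $\dim_{\F_p}(R/R^\ast)$ is exactly the relator rank rather than merely an upper bound).
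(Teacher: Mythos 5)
Your proof is correct, and it is essentially the argument behind this proposition: the paper itself gives no proof (it cites [5], Boston--Nover), and the proof there proceeds exactly as you do --- present $G=F/R$ and $Q_c(G)=F/S$ with $S=P_c(F)R$ over the same minimal free group, identify the multiplicator-minus-nucleus difference with $\dim_{\F_p} S/P_c(F)S^\ast$, and observe that $R/R^\ast$ surjects onto this quotient. Your care with the two side points (minimality of the presentation so that $r(G)=\dim_{\F_p}R/R^\ast$, and the case where $c$ exceeds the $p$-class) is exactly what is needed, so there is nothing to add.
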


If $G$ is an infinite $d$-generator pro-$p$ group, then it is determined by its quotients $Q_c(G)$ ($c=1,2,...$), which define an infinite path through the above tree. We can therefore locate $G$ as an end of this tree.
The $2$-tower group $G_K$ of $K:= \Q(\sqrt{-5460})$ is somewhere within the O'Brien tree (if finite) or at an end (if infinite). We will successively search for $Q_c(G_K)$ ($c=1,2,...$) and hence $G_K$ within this tree. Our main tool is to prune the tree using information about finite unramified field extensions $L$ of $\Q(\sqrt{-5460})$. Such a field $L$ is the fixed field of an open subgroup $H$ of $G_K$ and the $2$-class group of $L$ tells us $H^{ab}$. 

\section{Beginning the Search}
Let $G_K$ be the $2$-tower group of $\Q(\sqrt{-5460})$. As noted above, $Q_1(G_K) \cong (\Z/2)^4$. We next determine $Q_2(G_K)$ and simultaneously show that $r(G_K)=5$. We know that $Q_2(G_K)$ is one of the children of $(\Z/2)^4$. Call it $Q$. There are $15170$ such children. First, note that $Q^{ab} \cong [2,2,2,2]$. Only $7851$ of the children satisfy this. Applying proposition 3.1, using $r(G_K) \leq 5$ reduces us to $4722$ possibilities for $Q$.

Next, we can use the abelianizations of low index subgroups obtained by computing $2$-class groups of unramified extensions of small degree, as indicated at the end of the last section.

\begin{proposition}
$G_K$ has $15$ subgroups of index $2$. Their abelianizations are $[2,4,4]$ ($8$ times), $[2,4,8]$ ($2$ times), $[4,4,4]$ ($1$ time), $[2,2,2,4]$ ($2$ times), and $[2,2,4,4]$ ($2$ times). 
\end{proposition}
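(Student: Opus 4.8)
The plan is to translate this group-theoretic statement into one about unramified quadratic extensions of $K=\Q(\sqrt{-5460})$ via class field theory, and then to read off the answer from explicit $2$-class group computations. The count of $15$ is immediate: since $G_K^{ab}\cong(\Z/2)^4$ has exponent $2$, every index-$2$ subgroup is open and normal with quotient $\Z/2$, so such subgroups correspond bijectively to the nonzero elements of $\Hom(G_K,\Z/2)=\Hom((\Z/2)^4,\Z/2)$, of which there are $2^4-1=15$. Equivalently, they correspond to the $15$ index-$2$ subgroups of $\operatorname{Cl}_2(K)\cong(\Z/2)^4$, i.e.\ to the $15$ quadratic subextensions of the Hilbert $2$-class field of $K$.

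Next I would invoke the dictionary recorded at the end of Section 3. Writing $M$ for the maximal unramified $2$-extension of $K$, an index-$2$ subgroup $H=\Gal(M/L)$ has fixed field an unramified quadratic extension $L/K$; since $L/K$ is unramified, $M$ is also the maximal unramified $2$-extension of $L$, so $H^{ab}$ is the Galois group of the maximal unramified abelian $2$-extension of $L$, which class field theory identifies with $\operatorname{Cl}_2(L)$, the $2$-part of the class group of $L$. Thus the proposition is equivalent to computing $\operatorname{Cl}_2(L)$ for each of the $15$ unramified quadratic $L/K$.

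I would then make these $15$ fields explicit through genus theory. Factoring the discriminant into prime discriminants gives $-5460=(-4)\cdot(-3)\cdot 5\cdot(-7)\cdot 13$, so the Hilbert $2$-class field of $K$ (its genus field) is $\Q(\sqrt{-4},\sqrt{-3},\sqrt{5},\sqrt{-7},\sqrt{13})$, and $K$ is the subfield cut out by the product of all five prime discriminants. The $15$ unramified quadratic extensions are therefore the fields $L=K(\sqrt{m})$, where $m$ ranges over the products $\prod_{d\in S}d$ over proper nonempty subsets $S\subsetneq\{-4,-3,5,-7,13\}$, with $S$ and its complement $S^c$ giving the same $L$ (their product is $-5460$, already a square in $K$); this yields $(2^5-2)/2=15$ distinct fields.

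Finally, for each of these degree-$4$ fields $L$ I would compute the $2$-part of its class group in Magma or Pari/GP and tally the results, checking that one obtains $[2,4,4]$ eight times, $[2,4,8]$ twice, $[4,4,4]$ once, $[2,2,2,4]$ twice, and $[2,2,4,4]$ twice, consistent with $8+2+1+2+2=15$. The main obstacle is not conceptual but computational: one must compute these class groups reliably, ideally unconditionally rather than merely under GRH, extract the Sylow $2$-subgroups correctly, and keep careful track of which field produces which group so that the stated multiplicities come out right. For fields this small the computation is entirely feasible, and the only check of substance is matching the $15$ computed $2$-class groups against the asserted list.
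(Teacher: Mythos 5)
Your proposal is correct and is essentially the paper's own (largely implicit) argument: the paper proves Proposition 4.1 exactly by identifying the $15$ index-$2$ subgroups with the unramified quadratic extensions $L/K$ and computing $H^{ab}\cong \operatorname{Cl}_2(L)$ via Magma/Pari, as indicated at the end of Section 3 and in Section 6. Your additional genus-theory step, realizing the $15$ fields as $K(\sqrt{m})$ for products of the prime discriminants $-4,-3,5,-7,13$, is a correct and explicit way to carry out the computation that the paper leaves unstated.
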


This still does not cut us down to one possibility for $Q_2(G_K)$, but work of Koch from 1964 [12], building on that of Fr\"ohlich [8], does give an explicit presentation for $Q_2(G_K)$.

\begin{theorem}
The Galois group of the maximal unramified $2$-extension of nilpotency class $2$ of $\Q(\sqrt{-5460})$, $Q_2(G_K)$, is the $2$-class $2$ quotient of the pro-$2$ group of order $2^9$ and presentation
$$\langle a,b,c,d \mid a^{-2}(d,c),b^{-2}(d,a)((d,b),b),c^{-2}(b,a)((d,b),b),d^{-2}(c,a)(d,a)(d,b),(b,c) \rangle$$
\end{theorem}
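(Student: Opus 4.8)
The plan is to reduce the theorem to the explicit presentation theorem of Koch [12] (which builds on Fröhlich [8]): for a quadratic field this theorem computes the maximal nilpotency-class-$2$ quotient of the $2$-tower group directly from the ramified primes and the quadratic residue symbols among them. Thus the proof is in essence a finite arithmetic computation together with a verification against the invariants already established.

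First I would fix the arithmetic input. Writing $-5460 = -2^2\cdot 3\cdot 5\cdot 7\cdot 13$, the field $\Q(\sqrt{-5460})=\Q(\sqrt{-1365})$ has discriminant $-5460$ and ramified primes $\{2,3,5,7,13\}$; factoring into prime discriminants gives $-5460=(-4)(-3)(5)(-7)(13)$. Genus theory recovers the $2$-rank of the class group as one less than the number of ramified primes, namely $4$, in agreement with $G_K^{ab}\cong[2,2,2,2]$. Accordingly the four generators $a,b,c,d$ are taken to correspond to four of the five prime discriminants, the fifth being dependent through the genus-theoretic product relation among the classes of the ramified primes.

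Next I would run Koch's recipe. Modulo the third term $\gamma_3(F)$ of the lower central series each relator takes the form $s_i^{-2}\prod_j (s_j,s_i)^{e_{ij}}$, where the exponents $e_{ij}\in\{0,1\}$ and the vanishing of individual commutators are read off from the matrix of Legendre symbols $\left(\tfrac{\ell_i}{\ell_j}\right)$ attached to the primes $2,3,5,7,13$ (equivalently, from the associated Rédei matrix). The bulk of the work is assembling this finite table of symbols: trivial symbols force pairs of generators to commute, producing a relator such as $(b,c)$, while the nontrivial symbols populate the commutators in the four squaring relations $a^{-2}(d,c)$, $b^{-2}(d,a)$, $c^{-2}(b,a)$, $d^{-2}(c,a)(d,a)(d,b)$. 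Since the class-$2$ quotient annihilates $\gamma_3(F)$, the weight-three decorations $((d,b),b)$ displayed in two of the relators are immaterial here: they belong to the particular finite lift of order $2^9$ and do not affect the class-$2$ assertion.

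The hard part will be the careful handling of the prime $2$, where Koch's formulas are most delicate: one must work with the correct prime discriminant $-4$ (rather than $8$ or $-8$), keep track of the supplementary symbols $\left(\tfrac{2}{\ell}\right)$ and $\left(\tfrac{-1}{\ell}\right)$, and fix once and for all the normalization matching each generator to its prime so that the signs in the squaring relations are consistent. Having assembled the candidate class-$2$ group, I would close the argument by checking it against the data computed independently above: its abelianization must be $[2,2,2,2]$; its fifteen index-$2$ subgroups must have the abelianizations listed earlier ($[2,4,4]$ eight times, $[2,4,8]$ twice, $[4,4,4]$ once, $[2,2,2,4]$ twice, $[2,2,4,4]$ twice); and its $p$-multiplicator and nuclear ranks, fed into Proposition 3.1 together with the bound $r(G_K)\le d(G_K)+1=5$ from [21], must force $r(G_K)=5$. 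Agreement on all these invariants, combined with Koch's theorem, identifies $Q_2(G_K)$ with the class-$2$ quotient of the presented group of order $2^9$.
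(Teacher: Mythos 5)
Your proposal follows essentially the same route as the paper: the paper's entire justification for this theorem is the appeal to Koch's 1964 explicit-presentation theorem [12], building on Fr\"ohlich [8], which computes the class-$2$ quotient of the $2$-tower group from the prime-discriminant factorization $-5460=(-4)(-3)(5)(-7)(13)$ and the associated Legendre-symbol (R\'edei) data, exactly as you describe. Your added observations---that the weight-three terms are immaterial modulo class $2$, and that the result can be cross-checked against the abelianization data and Proposition 3.1---are correct and consistent with how the paper uses the theorem (e.g.\ to deduce Corollary 4.3).
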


This group has $p$-multiplicator rank $11$ and nuclear rank $6$. Proposition 3.1, together with [21], then implies the following.

\begin{corollary}
The relator rank of the $2$-tower group of $\Q(\sqrt{-5460})$ is $5$.
\end{corollary}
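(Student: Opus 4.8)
The plan is to combine Proposition 3.1 with the already-known bounds on the relator rank. Recall from Section 2 that by Koch/[21], the relator rank $r(G_K)$ of the $2$-tower group is constrained to equal either $d(G_K)$ or $d(G_K)+1$, and since $Q_1(G_K) \cong (\Z/2)^4$ we have $d(G_K) = 4$. Thus $r(G_K) \in \{4,5\}$, and it suffices to rule out $r(G_K) = 4$.

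To do this I would instantiate Proposition 3.1 at the specific quotient $Q_2(G_K)$ identified in Theorem 4.2. The proposition states that for any $c$,
$$\text{p-multiplicator rank}(Q_c(G_K)) - \text{nuclear rank}(Q_c(G_K)) \leq r(G_K).$$
Taking $c = 2$, I would substitute the two invariants computed for $Q_2(G_K)$ immediately after the statement of Theorem 4.2, namely $p$-multiplicator rank $11$ and nuclear rank $6$. This gives the lower bound $11 - 6 = 5 \leq r(G_K)$, so $r(G_K) \geq 5$. Combined with the upper bound $r(G_K) \leq 5$ from the previous paragraph, we conclude $r(G_K) = 5$, which is exactly the assertion of the corollary.

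The only genuine content beyond quoting Proposition 3.1 is verifying the two group-theoretic invariants of $Q_2(G_K)$ — the $p$-multiplicator rank and the nuclear rank — which is a finite computation on the explicit presentation of Theorem 4.2 and is exactly the kind of calculation O'Brien's algorithm (and its Magma implementation) is designed to perform. I expect this verification to be the main potential obstacle, not because it is conceptually hard but because it relies on correctly forming the $p$-covering group $G^\ast = F/R^\ast$ and its nucleus $P_c(G^\ast)$ for the given presentation; any slip in the presentation or in reading off the ranks would propagate directly into the inequality. Since these ranks have already been asserted in the text, however, the corollary follows immediately by arithmetic, with no further case analysis required.
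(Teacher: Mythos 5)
Your proposal is correct and is essentially identical to the paper's own argument: the authors likewise combine the Shafarevich bound $r(G_K) \leq d(G_K)+1 = 5$ from [21] with Proposition 3.1 applied to $Q_2(G_K)$, whose $p$-multiplicator rank $11$ and nuclear rank $6$ give $r(G_K) \geq 5$. No discrepancy to report.
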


The large nuclear rank suggests that $Q_2(G_K)$ has many children, and indeed it has $151501$ of them. Of these $32768$ satisfy Propositions 3.1 and 4.1. This is still a huge number to handle and unfortunately there is no extension of Theorem 4.2 to higher nilpotency class and so we introduce some new techniques.

\section{Abelianizations of Larger Index Subgroups}
We can also compute the $2$-class group for every unramified degree $4$ extension of $K$, which then tells us the lattice of subgroups of $G_K$ of index at most $4$, together with their abelianizations.

\begin{proposition}
Up to conjugacy, $G_K$ has $51$ subgroups of index $4$. Their abelianizations are $[2,4,4]$ ($2$ times), $[2,4,8]$ ($8$ times), $[4,4,8]$ ($3$ times), $[2,2,2,4]$ ($2$ times), $[2,2,4,4]$ ($14$ times), $[2,2,4,8]$ ($9$ times), $[2,2,4,16]$ ($5$ times), $[2,2,8,8]$ ($1$ time), $[2,4,4,4]$ ($3$ times), $[2,4,4,8]$ ($1$ time), $[2,2,2,2,4]$ ($1$ time), $[2,2,2,4,4]$ ($1$ time), $[2,2,2,8,8]$ ($1$ time). 
\end{proposition}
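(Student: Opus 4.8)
The plan is to translate the statement entirely into class field theory and then carry out the corresponding class group computations, exactly as was done for the index-$2$ subgroups in Proposition 4.1. Write $K_{\{2\}}$ for the maximal unramified $2$-extension of $K$, so that $G_K = \Gal(K_{\{2\}}/K)$. By the Galois correspondence, conjugacy classes of index-$4$ subgroups $H \leq G_K$ are in bijection with $K$-isomorphism classes of degree-$4$ subextensions $L$ of $K_{\{2\}}/K$, i.e.\ with the unramified degree-$4$ extensions $L/K$ sitting inside the tower. For each such $L$ one has $H = \Gal(K_{\{2\}}/L)$; since $L/K$ is unramified, $K_{\{2\}}$ is also the maximal unramified $2$-extension of $L$, so $H$ is the $2$-tower group of $L$ and $H^{\mathrm{ab}} = \Gal(L_1/L)$, where $L_1$ is the maximal abelian unramified $2$-extension of $L$. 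By the Artin reciprocity map, $H^{\mathrm{ab}}$ is therefore isomorphic to the $2$-Sylow subgroup of the ideal class group $\operatorname{Cl}(L)$. Thus the whole proposition reduces to enumerating the degree-$4$ unramified extensions $L/K$ up to isomorphism and, for each, computing the $2$-part of $\operatorname{Cl}(L)$.

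For the enumeration I would build these fields in two steps, exploiting that $G_K$ is a pro-$2$ group: every proper subgroup lies in a maximal (index-$2$) subgroup, so each degree-$4$ unramified $L/K$ contains one of the $15$ quadratic unramified extensions $L_i/K$ catalogued in Proposition 4.1, and $L/L_i$ is then a degree-$2$ (hence abelian) unramified extension. First I would produce explicit defining polynomials for the $15$ fields $L_i$, as in Proposition 4.1, and compute each $\operatorname{Cl}(L_i)$. For each $L_i$, the degree-$2$ unramified extensions $L/L_i$ correspond bijectively to the index-$2$ subgroups of $\operatorname{Cl}(L_i)$, of which there are $2^{s_i}-1$ with $s_i$ the $2$-rank of $\operatorname{Cl}(L_i)$; I would generate each as a subfield of the Hilbert class field of $L_i$ using the ray/ring class field machinery in Magma and Pari. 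This yields a redundant list of all degree-$4$ unramified extensions of $K$, complete by construction.

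The list is redundant because a single $L$ generally contains several $L_i$ and so appears several times. I would therefore sort the constructed octic fields into $\Q$- (equivalently $K$-) isomorphism classes by explicit field-isomorphism testing, cross-checked against the subgroup lattice: the number of $L_i$ contained in a given $L$ equals the number of index-$2$ subgroups of $G_K$ containing $H$, which is forced by the image of $H$ in $G_K/\Phi(G_K)\cong(\Z/2)^4$ (this image is either a hyperplane or a codimension-$2$ subspace, giving exactly $1$ or $3$ containing index-$2$ subgroups). The claim is that exactly $51$ isomorphism classes survive. Finally, for each class I would compute the $2$-Sylow subgroup of $\operatorname{Cl}(L)$ and read off $H^{\mathrm{ab}}$, tabulating multiplicities to match the stated list.

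The main obstacle is this last computation: the fields $L$ have degree $8$ over $\Q$ with $|\disc L| = 5460^4 \approx 8.9\times 10^{14}$ (the root discriminant being the constant $\sqrt{5460}$ throughout the tower), and computing class groups of octic fields of this size is expensive and, to be provably correct within a feasible running time, relies on the bounds coming from GRH. A secondary difficulty is ensuring the enumeration is simultaneously complete and correctly deduplicated, since a single matching error would corrupt the multiplicities; here the internal consistency check against the subgroup lattice of $G_K$, together with the requirement that restricting the index-$4$ data must recover the already-known index-$2$ data of Proposition 4.1, provides the needed safeguard.
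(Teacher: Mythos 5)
Your proposal is correct and takes essentially the same route as the paper: Proposition 5.1 is established there precisely by enumerating the unramified degree-$4$ extensions of $K$, computing their $2$-class groups with Magma/Pari, and identifying these with the abelianizations of the index-$4$ subgroups via class field theory, exactly as you describe. Your additional details (building the octic fields as quadratic extensions of the $15$ quadratic unramified extensions, deduplicating via isomorphism testing, and the lattice consistency check) are a fleshed-out account of that same computation.
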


We also recorded lattice data, i.e. which index $2$ subgroups contain which index $4$ subgroups. Note that there are $5$ conjugacy classes of subgroups that have a unique abelianization. We call these \it{critical}\rm \ subgroups of index $4$. We computed the abelianizations of their maximal subgroups (using pari gp to compute class groups of degree $16$ extensions of $\Q$). This yielded the following.

\begin{proposition}
As regards the $5$ critical subgroups of index $4$, \\
1) the unique one with abelianization $[2,2,8,8]$ has index $2$ subgroups with abelianizations $[2,2,8,16]$ ($10$ times), $[4,4,8,16]$ ($2$ times), $[2,2,4,4,16]$ ($1$ time), $[2,2,2,8,16]$ ($1$ time), $[2,2,2,2,8,16]$ ($1$ time); \\
2) the unique one with abelianization $[2,4,4,8]$ has index $2$ subgroups with abelianizations $[4,8,8]$ ($4$ times), $[2,2,8,8]$ ($4$ times), $[2,4,4,8]$ ($4$ times), $[2,2,4,4,8]$ ($1$ time), $[2,2,4,4,16]$ ($1$ time),$[2,2,2,4,8,8]$ ($1$ time);\\
3) the unique one with abelianization $[2,2,2,2,4]$ has index $2$ subgroups with abelianizations $[2,2,2,8]$ ($4$ times), $[2,2,4,4]$ ($6$ times), $[2,2,4,8]$ ($16$ times), $[2,2,2,2,4]$ ($2$ times), $[2,2,2,4,8]$ ($1$ time), $[2,2,2,8,8]$ ($1$ time), $[2,2,4,4,8]$ ($1$ time);\\
4) the unique one with abelianization $[2,2,2,4,4]$ has index $2$ subgroups with abelianizations $[2,2,8,8]$ ($8$ times), $[2,4,4,8]$ ($6$ times), $[4,4,4,4]$ ($2$ times), $[2,2,2,4,8]$ ($8$ times), $[2,2,4,4,4]$ ($2$ times), $[2,2,4,4,16]$ ($1$ time), $[2,2,2,2,4,4]$ ($1$ time), $[2,2,2,2,8,8]$ ($2$ times), $[2,2,2,4,8,8]$ ($1$ time); \\
5) the unique one with abelianization $[2,2,2,8,8]$ has index $2$ subgroups with abelianizations $[2,2,8,8]$ ($4$ times), $[2,2,8,16]$ ($6$ times), $[2,4,8,8]$ ($8$ times), $[2,4,8,16]$ ($4$ times), $[2,2,2,8,16]$ ($4$ times), $[2,2,4,4,8]$ ($1$ time), $[2,2,4,8,8]$ ($1$ time), $[2,2,4,16,16]$ ($2$ times), $[2,2,2,4,8,8]$ ($1$ time); \\	
\end{proposition}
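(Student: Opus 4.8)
The statement is entirely computational: each of the five critical subgroups is characterized, up to conjugacy, by its (unique) abelianization among index-$4$ subgroups, and hence corresponds to a well-defined degree-$8$ number field $L_i$ over $\Q$ (an unramified degree-$4$ extension of $K$). The plan is to realize each $L_i$ explicitly, enumerate its unramified quadratic extensions, and read off the $2$-class group of each of these degree-$16$ fields.

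First I would set up the class field theory dictionary. Writing $\Omega$ for the maximal unramified $2$-extension of $K$, so $G_K = \Gal(\Omega/K)$, an open subgroup $H \le G_K$ with fixed field $L$ satisfies $H^{ab} \cong$ the $2$-class group of $L$, because $\Omega$ is simultaneously the maximal unramified $2$-extension of $L$ (as $L/K$ is unramified). In particular, the index-$2$ subgroups of a critical subgroup $H_i = \Gal(\Omega/L_i)$ are in bijection with the unramified quadratic extensions $M/L_i$ inside $\Omega$; there are $2^{r_i}-1$ of them, where $r_i$ is the $2$-rank of the $2$-class group of $L_i$, and for each such $M$ the corresponding index-$8$ subgroup has abelianization the $2$-class group of $M$.

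The computation then proceeds in three steps for each $i$. (1) Construct $L_i$: since $G_K$ is pro-$2$, $H_i$ lies in some index-$2$ subgroup, so $L_i$ is an unramified quadratic extension of one of the index-$2$ subfields from Proposition 4.1, identified through the recorded lattice data; I would produce a defining polynomial via the class field theory routines of Magma or Pari Gp and then optimize it. (2) Compute the $2$-class group of $L_i$ (already known from Proposition 5.1) and build its $2$-Hilbert class field, thereby listing all $2^{r_i}-1$ unramified quadratic extensions $M/L_i$ together with defining polynomials. (3) For each degree-$16$ field $M$, compute its $2$-class group in Pari Gp and record the isomorphism type. Tabulating these for fixed $i$ yields the five lists in the statement, and the multiplicities must sum to $2^{r_i}-1$ (namely $15,15,31,31,31$ for the five cases), which serves as an internal consistency check.

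The main obstacle is step (3): computing class groups of degree-$16$ fields of large discriminant is near the limit of feasibility, so in practice one relies on GRH both to certify the class-number bound and to keep the computation tractable. A secondary difficulty is producing sufficiently reduced defining polynomials for the degree-$16$ fields, since a poor model can make the class group computation prohibitively slow. Finally, one must ensure that the enumeration via the $2$-Hilbert class field captures each unramified quadratic extension exactly once and that the Galois action is handled correctly, so that each abelianization is attached to the right conjugacy class of index-$8$ subgroups; the restriction to \emph{critical} subgroups—those with a unique index-$4$ abelianization—is precisely what removes any ambiguity in identifying the field $L_i$ to begin with.
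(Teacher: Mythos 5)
Your proposal matches the paper's method: the paper identifies each critical subgroup with its fixed field (an unramified degree-$4$ extension of $K$, hence a degree-$8$ field over $\Q$), enumerates the unramified quadratic extensions via class field theory, and computes the $2$-class groups of the resulting degree-$16$ extensions of $\Q$ in Pari Gp, exactly as you describe. Your multiplicity check ($15,15,31,31,31 = 2^{r_i}-1$) is a sound addition, consistent with the counts in the statement.
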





\section{Capitulation}

The index $4$ abelianization information from Section 5 cuts the possibilities down from $32768$ to $4096$ candidates for $Q_3(G_K)$. Another technique we can usefully employ is capitulation. If $H$ is a finite index subgroup of $G_K$, this concerns the transfer map $\varphi: G_K^{ab} \rightarrow H^{ab}$. 

Let $H$ be a finite index subgroup of $G_K$ corresponding to extension $L$ of $K$. Then $H$ is the $2$-tower group of $L$. As noted earlier, Class Field Theory gives an isomorphism between $G_K^{ab}$ and the $2$-class group of $K$ and between $H^{ab}$ and the $2$-class group of $L$. Consider the following commutative square, where the upper horizontal map is the transfer map and the lower horizontal map is given by sending integral ideal $\alpha \subset O_K$ to the ideal class of $\alpha O_L$:

\[\begin{tikzcd}
  G_K^{ab} \arrow[r, "\varphi"] \arrow[d, "iso"]
    & H^{ab} \arrow[d, "iso"] \\
  Cl_2(K) \arrow[r, "\alpha O_L"]
    &Cl_2(L)
\end{tikzcd}\]

The vertical maps are all isomorphisms, and so the kernel is the same for both the upper and lower horizontal maps. The strategy is to compute the kernel of the lower map using Class Field Theory and compare it with a computation using group theory of the kernel of the upper map. 

On the group side, however, we only have some candidate quotient $Q$ of $G_K$ of $2$-class $3$ to work with. Let $\bar{H}$ denote the image of $H$ in $Q$. Consider the commutative square:

\[\begin{tikzcd}
Q^{ab} \arrow[r, "\phi"]
& \bar{H}^{ab}  \\
G_K^{ab} \arrow[r, "\varphi"]  \arrow[u, two heads]
& H^{ab}\arrow[u, two heads]\\
\end{tikzcd}\]

In general, it is difficult to determine the kernel of $\phi$, but in our situation all $4096$ children have the same abelianization as $G_K^{ab}$, and the abelianization of each index $2$ subgroup matches the correct field data, and so, if $H$ has index $2$ in $G_K$, both the vertical maps are isomorphisms. We save those children where the group-theoretical kernel matches the true (number-theoretical) data. In fact, there could only be different kernels of $\phi$ for the $13$th and $14$th subgroups (as listed by Magma) of index $2$. Saving the cases with a match reduces us to $512$ possibilities for $Q_3(G_K)$. These all have order $2^{15}$ and nuclear rank $5$ or $6$ (and so have many children). 



\section{Nilpotency class $3$ quotient of $G_K$}

So far we have $512$ candidates for $Q_3(G_K)$. We will now explain how extensive calculations reduce us to just $2$ possibilities (the $14$th and $52$nd candidates in the list produced by Magma as described above). The strategy is to use the abelianizations of index $4$ subgroups of $G_K$ and of index $8$ subgroups contained inside critical subgroups of index $4$, i.e. Propositions 5.1 and 5.2. The corresponding abelianizations of the $512$ candidates for $Q_3(G_K)$ are all consistent with this data, but for some of them all their children or grandchildren fail. For example, $384$ of them (the last $384$ in Magma's list) can be eliminated by using the following criterion of Nover [5].

\begin{proposition}
	Let $G$ be a pro-$p$ group, $N$ a finite index normal subgroup, and $V$ a word. Assume $P_c(G) \le N$, if $[G/P_c(G): V(N/P_c(G))] = [G/P_{c+1}(G): V(N/P_{c+1}(G))]$, then $(N/P_c(N) )/V(N/P_c(G)) \simeq N/V(N)$. 
\end{proposition}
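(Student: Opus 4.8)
The plan is to show that one-step stability of the index forces the inclusion $P_c(G) \subseteq V(N)$, after which the verbal quotient of $N$ is already visible in the finite quotient by $P_c(N)$ (equivalently by $P_c(G)$), which is exactly the stated isomorphism. First I would rewrite the hypothesis as a statement about subgroups of $G$. Since $P_c(G) \le N$, the image of $N$ in $G/P_c(G)$ is $N/P_c(G)$ and its verbal subgroup is $V(N)P_c(G)/P_c(G)$, so $[G/P_c(G) : V(N/P_c(G))] = [G : V(N)P_c(G)]$, and likewise $[G/P_{c+1}(G):V(N/P_{c+1}(G))] = [G:V(N)P_{c+1}(G)]$. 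Because $P_{c+1}(G) \le P_c(G)$ we always have $V(N)P_{c+1}(G) \le V(N)P_c(G)$, and since $G/P_{c+1}(G)$ is finite these indices are finite; hence the hypothesised equality is equivalent to $V(N)P_c(G) = V(N)P_{c+1}(G)$, i.e. to $P_c(G) \subseteq V(N)P_{c+1}(G)$. Here $V(N)$ is characteristic in $N$ and so normal in $G$, so all of these are genuine subgroup identities.

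Next comes the core of the argument, a Nakayama-type step. I would pass to $\bar G := G/V(N)$ and let $M$ denote the image of $P_c(G)$, a closed normal subgroup of $\bar G$. One has $\overline{P_{c+1}(G)} = [\bar G, M]M^p$, and by the previous step $M \subseteq \overline{P_{c+1}(G)}$, while trivially $\overline{P_{c+1}(G)} \subseteq M$; therefore $M = [\bar G, M]M^p$. I would then prove by induction on $i$ that $M \le P_i(\bar G)$ for every $i$: the base case is $M \le \bar G = P_0(\bar G)$, and if $M \le P_i(\bar G)$ then $M = [\bar G,M]M^p \le [\bar G, P_i(\bar G)]P_i(\bar G)^p = P_{i+1}(\bar G)$. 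As $\bar G$ is a finitely generated pro-$p$ group (being a quotient of the finitely generated $G$, which is the operative setting), its $p$-central series satisfies $\bigcap_i P_i(\bar G) = 1$, so $M = 1$; that is, $P_c(G) \subseteq V(N)$.

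Finally I would deduce the isomorphism. From $P_c(N) \le P_c(G) \subseteq V(N)$ it follows that the quotient map $N \to N/P_c(N)$ carries $V(N)$ onto $V(N/P_c(N)) = V(N)/P_c(N)$, so the induced surjection $N/V(N) \to (N/P_c(N))/V(N/P_c(N))$ has trivial kernel and is an isomorphism; the same holds with $P_c(G)$ in place of $P_c(N)$, since $P_c(G) \subseteq V(N)$ as well. This yields the asserted $(N/P_c(N))/V(N/P_c(G)) \simeq N/V(N)$, the two admissible readings of the left-hand side coinciding precisely because $P_c(G) \subseteq V(N)$.

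The hard part is the Nakayama step: one must check that $V(N)$ is closed and normal so that $\bar G$ and $M$ are well defined, that $\overline{P_{c+1}(G)}$ really equals $[\bar G,M]M^p$, and above all that the $p$-central series of the finitely generated pro-$p$ group $\bar G$ intersects in the identity, which is exactly what promotes one-step index stability to the full inclusion $P_c(G) \subseteq V(N)$. The translation in the first step and the concluding diagram chase are routine by comparison.
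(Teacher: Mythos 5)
Your proof is correct. Note that the paper itself does not prove this proposition: it is quoted as Nover's criterion with a citation to [5], so there is no in-paper argument to compare against. Your route is the natural (and essentially the standard) one behind this criterion: equality of the two finite indices forces $V(N)P_c(G)=V(N)P_{c+1}(G)$; then in $\bar{G}=G/V(N)$ the image $M$ of $P_c(G)$ satisfies $M=[\bar{G},M]M^p$, whence by induction $M\le\bigcap_i P_i(\bar{G})=1$, i.e. $P_c(G)\le V(N)$, and the asserted isomorphism follows since $P_c(N)\le P_c(G)\le V(N)$. Two small remarks: your appeal to finite generation of $G$ (so that $G/P_{c+1}(G)$ is finite and equal indices give equal subgroups) is indeed needed and is the operative setting of the paper, since otherwise the indices in the hypothesis need not even be finite; and the triviality of $\bigcap_i P_i(\bar{G})$ actually holds for an arbitrary pro-$p$ group (for each open normal $U$ one has $P_i(\bar{G})\le U$ for large $i$), so nothing beyond the index argument depends on finite generation. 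Your handling of the statement's mixed notation ($P_c(N)$ versus $P_c(G)$ on the left-hand side) is also the right reading: both quotients coincide precisely because both subgroups land inside $V(N)$.
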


We apply this with $N$ the $5$th critical subgroup. For those $384$ groups, it has abelianization $[2,2,2,4,8]$. It turns out that for all their children too, this subgroup has abelianization $[2,2,2,4,8]$. Applying Nover's criterion, this subgroup will have abelianization $[2,2,2,4,8]$ for every descendant and so will never reach its true value of $[2,2,2,8,8]$. This eliminates those $384$ groups.

By further use of low index abelianizations, we conjecturally get down to the following two possibilities for $Q_3(G_K)$. The point is that whereas each of our $128$ groups typically has tens of thousands of children, there are much more limited possibilities for the abelianizations of the index $\leq 4$ subgroups and of the index $8$ subgroups contained inside the critical subgroups. In fact, each of the $128$ candidate groups has either $2, 4,$ or $8$ such possible lists of abelianizations. For a given group $G$, this set of lists can be found in practice by using a Magma program that randomly takes quotients of the $p$-covering group of $G$ to produce children of $G$. Since each list arises equally often, we expect only to need at most about $8\ \text{ln}(8) \approx 17$ such quotients.

Likewise, we can produce such lists for further descendants of the $128$ groups and compare them with the true list as given in Propositions 5.1 and 5.2. We find that matches apparently only arise for descendants (in fact grandchildren) of $2$ of the $128$ candidates. If Nover's lemma were true for non-normal subgroups $N$, then this would yield a proof - as things stand, we have plenty of evidence for the following conjecture.

\begin{conjecture}
The Galois group of the maximal unramified $2$-class $3$ $2$-extension of $\Q(\sqrt{-5460})$, $Q_3(G_K)$, is the $3$-class $2$ quotient of the pro-$2$ group with presentation 

\begin{equation}
\begin{aligned}
\langle a,b,c,d &\mid a^{-2}(d,c),b^{-2}(d,a)((d,b),b),c^{-2}(b,a)((d,b),b),\\
&d^{-2}(c,a)(d,a)(d,b)(b,a,a)(c,a,a)(d,a,a),(b,c) \rangle
\end{aligned}
\end{equation}
or
\begin{equation}
\begin{aligned}
\langle a,b,c,d &\mid a^{-2}(d,c),b^{-2}(d,a)((d,b),b),c^{-2}(b,a)(b,a,d)(c,a,a)((d,b),b), \\
&d^{-2}(c,a)(d,a)(d,b)(b,a,a)(b,a,d),(b,c) \rangle .
\end{aligned}
\end{equation}

Each has order $2^{15}$ and nuclear rank $6$.
\end{conjecture}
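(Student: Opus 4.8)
The plan is to locate $Q_3(G_K)$ inside the O'Brien tree as an immediate descendant of the node $Q_2(G_K)$, which Theorem 4.2 determines exactly, and then to prune away every competitor using the arithmetic of the unramified $2$-extensions of $K$. Since $Q_2(G_K)$ has $2$-class $2$, its children are exactly the $2$-groups of $2$-class $3$ one level below it, and $Q_3(G_K)$ must be among them; O'Brien's $p$-group generation algorithm produces all $151501$ of these as suitable quotients of the $2$-covering group of $Q_2(G_K)$. The whole proof is then a machine-assisted sieve: impose, one necessary condition at a time, the constraints that any class-$3$ quotient of the genuine tower group $G_K$ must satisfy, and hope that only the two listed groups survive.

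First I would apply the group-theoretic bound of Proposition 3.1 with $r(G_K)=5$ (Corollary 4.3) together with the index $2$ abelianization data of Proposition 4.1; this already cuts the $151501$ children to $32768$. Next I would compare the abelianizations of the index $4$ subgroups against Proposition 5.1. Here one must remember that for a class-$3$ candidate $Q$ the image $\bar H$ of a subgroup $H$ satisfies only that $\bar H^{ab}$ is a \emph{quotient} of the true $H^{ab}\cong Cl_2(L)$, so the filter keeps those candidates whose index $\le 4$ subgroup abelianizations are compatible, as a multiset and under this quotient relation, with Propositions 4.1 and 5.1; this reduces us to $4096$. I would then invoke capitulation: for an index $2$ subgroup $H$ the transfer $\varphi\colon G_K^{ab}\to H^{ab}$ has a kernel that is computable number-theoretically from the map $\alpha\mapsto\alpha O_L$, and because the surviving candidates all share $G_K^{ab}$ and the correct index $2$ abelianizations the two vertical maps in the second commutative square are isomorphisms, so this kernel can also be read off group-theoretically from the candidate. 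Keeping only those whose transfer kernels match the field data --- the discrepancy can occur only at the $13$th and $14$th index $2$ subgroups --- brings us to $512$.

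The decisive rigorous step is Nover's criterion (Proposition 7.1): for a finite-index \emph{normal} subgroup $N$ and a word $V$, if the relevant index does not change in passing from $Q_c$ to $Q_{c+1}$, then the value of $V$ on $N$ is frozen in every further quotient. Applying this with $N$ the $5$th critical subgroup, whose abelianization is stuck at $[2,2,2,4,8]$ for the offending candidates and for all their children while the true value is $[2,2,2,8,8]$, permanently eliminates $384$ of the $512$ and leaves $128$.

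To pass from these $128$ to the two claimed presentations I would, for each candidate, enumerate the finitely many ($2$, $4$, or $8$) possible abelianization-lists of its index $\le 4$ and critical index $8$ subgroups by sampling random children of its $2$-covering group --- about $8\ln 8\approx 17$ samples suffice, by the coupon-collector heuristic --- and then repeat for deeper descendants, comparing each list with the true data of Propositions 5.1 and 5.2. Empirically a match survives only for grandchildren of two candidates, producing the two groups, which one verifies directly to have order $2^{15}$, nuclear rank $6$, and to be indistinguishable by every invariant computed (which is why the conjecture must offer both). The main obstacle, and the reason the statement is a conjecture rather than a theorem, is that this final reduction rests on freezing the abelianizations of the critical index $4$ subgroups and the index $8$ subgroups inside them, which are in general \emph{non-normal}; Proposition 7.1 does not apply, so the stabilization is only checked through grandchildren rather than proved for all descendants. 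A genuine proof therefore hinges on a non-normal analogue of Nover's lemma --- a statement that a subgroup abelianization which has stabilized along the $2$-central series remains stable thereafter --- and establishing such a lemma is precisely the hard part.
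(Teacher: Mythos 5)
Your proposal follows essentially the same route as the paper: the identical sieve $151501 \to 32768$ (Propositions 3.1 and 4.1) $\to 4096$ (Proposition 5.1) $\to 512$ (capitulation) $\to 128$ (Nover's criterion applied to the $5$th critical subgroup) $\to 2$ (random sampling of descendant abelianization lists against Propositions 5.1 and 5.2), and you correctly identify the same obstruction the authors do --- that Nover's lemma fails to apply to non-normal subgroups, which is exactly why the statement is a conjecture rather than a theorem.
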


As for understanding quotients of higher nilpotency class, we need to introduce more tools in the next two sections.

\section{Moribund $p$-groups}

We call a $p$-group \it{moribund}\rm \ if all its descendants are finite, in other words if there is no infinite path in the O'Brien tree starting at the given group. If our candidates for $Q_3(G_K)$ were moribund, then we would conclude that $G_K$ must be finite. This is not true for them, but holds for many of their descendants. There is a simple test for a $p$-group to be moribund.

\begin{theorem} 
Suppose that $G$ is a finite $p$-group. Let $G_1$ be its $p$-covering group, $G_2$ the $p$-covering group of $G_1$, and so on. If there exists $c$ such that the $G_c$ has nuclear rank $0$, then $G$ is moribund.
\end{theorem}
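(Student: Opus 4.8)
The plan is to reduce the claim to a uniform bound on the $p$-class of the descendants of $G$: since each finite $p$-group has only finitely many children (its $p$-multiplicator being finite), the descendant tree of $G$ is finitely branching, so by K\"onig's lemma it contains no infinite path as soon as it has bounded depth. Write $G=F/R$ with $F$ free pro-$p$ on $d(G)$ generators, put $R^{(0)}=R$ and $R^{(k+1)}=[R^{(k)},F](R^{(k)})^p$, so that $G_k=F/R^{(k)}$ is the $k$-th iterated $p$-covering group and $G_0=G$. The one algebraic fact I would use repeatedly is the monotonicity of the star operation: if $R^{(k)}\subseteq M$ then $(R^{(k)})^{\ast}=[R^{(k)},F](R^{(k)})^p\subseteq [M,F]M^p=M^{\ast}$.

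First I would show that every descendant $H$ of $G$ of $p$-class $c_0+k$, where $c_0$ denotes the $p$-class of $G$, is a quotient of $G_k$. Running along the chain of ancestors $G=H_0,H_1,\dots,H_k=H$, each $H_{j+1}$ is a child of $H_j$ and hence a quotient of $H_j^{\ast}$; writing $H_j=F/M_j$, the inductive hypothesis $R^{(j)}\subseteq M_j$ together with monotonicity gives $R^{(j+1)}\subseteq M_j^{\ast}\subseteq M_{j+1}$, completing the induction. Next I would track the $p$-class along the tower itself: if $e_k$ is the $p$-class of $G_k$ then $P_{e_k}(F)\subseteq R^{(k)}$ yields $P_{e_k+1}(F)\subseteq R^{(k+1)}$, so $G_{k+1}$ has $p$-class $e_k$ or $e_k+1$, the value $e_k+1$ occurring exactly when the nucleus $P_{e_k}(F)R^{(k+1)}/R^{(k+1)}$ of $G_k$ is nontrivial, i.e. when $G_k$ has positive nuclear rank. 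Choosing $c$ minimal with nuclear rank $0$, every earlier $G_j$ has positive nuclear rank, the $p$-class goes up by one at each of the first $c$ steps, and $G_c$ has $p$-class exactly $e:=c_0+c$.

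The heart of the argument is a descent lemma: \emph{if $G_c$ is terminal, then so is every quotient of $G_c$ of the same $p$-class $e$}. Terminality of $G_c$ means precisely $P_e(F)\subseteq R^{(c+1)}$, and if $H=F/M$ is such a quotient, then $R^{(c)}\subseteq M$, so monotonicity gives $P_e(F)\subseteq R^{(c+1)}=(R^{(c)})^{\ast}\subseteq M^{\ast}$, which is exactly the statement that the nucleus of $H$ vanishes. Applying this to an arbitrary descendant $H$ of $G$ of $p$-class $e$ --- which by the first step is a quotient of $G_c$ of the same $p$-class --- shows that every such descendant is terminal. Hence $G$ has no descendant of $p$-class $e+1$, the tree has depth at most $c$, and $G$ is moribund.

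The step I expect to be most delicate is the matching of $p$-classes in the descent lemma: the lemma only transfers terminality to quotients of the \emph{same} $p$-class, so it is essential that $G_c$ have $p$-class exactly $c_0+c$, which is what forces the choice of $c$ to be minimal and makes the $p$-class bookkeeping of the second paragraph necessary. Everything else --- the identification of level-$k$ descendants as quotients of $G_k$ and the vanishing-nucleus computation --- reduces to the single monotonicity statement $R^{(k)}\subseteq M\Rightarrow (R^{(k)})^{\ast}\subseteq M^{\ast}$, so the real work is organizational rather than a new construction.
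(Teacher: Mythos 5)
Your proof is correct, and since the paper states Theorem 8.1 without giving any proof, your argument supplies precisely the reasoning its Section 3 machinery is set up for: realizing the iterated $p$-covering groups as $F/R^{(k)}$ so that every depth-$k$ descendant is a quotient of $G_k$, using minimality of $c$ to force the $p$-class of $G_c$ to be exactly $c_0+c$, and using monotonicity of $M \mapsto [M,F]M^p$ to transfer the vanishing nucleus of $G_c$ to every quotient of the same $p$-class, so the tree has depth at most $c$. One trivial remark: the appeal to K\"onig's lemma is superfluous, since a tree of bounded depth obviously has no infinite path.
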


This test is used to avoid the need to compute huge numbers of descendants and to show that if $G_K$ is infinite, then it must lie in certain areas of the O'Brien tree. For instance, there are $224$ moribund groups among the original $512$ candidates for $Q_3(G_K)$. 







\section{Nilpotency class $>3$ quotients of $G_K$}

We have made extensive investigation of the descendants of the $128$ candidates for $Q_3(G_K)$ and in particular of the two groups in Conjecture 7.2. For those two candidates we restrict to descendants satisfying Propositions 5.1 and 5.2, which is a moot restriction for large enough $2$-class. We have found millions of such groups, with order as high as $2^{81}$ and nilpotency class as high as $14$. 

The most convincing evidence that $G_K$ is finite comes from the following observation. If $G$ is a $2$-group, we define $G^n$ to be the subgroup of $G$ generated by all $n$th powers. If $G$ is a pro-$2$ group, then $G^n$ denotes the closed subgroup generated by all $n$th powers.

\begin{conjecture}
Let $G$ be any  descendant of one of the $128$ candidates for $Q_3(G_K)$. Then $G^8$ is of index at most $2^{40}$ and is abelian.
\end{conjecture}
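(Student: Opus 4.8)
The statement has two parts to establish for every descendant $G$: that the characteristic (hence normal) subgroup $G^8$ is abelian, and that $[G:G^8]\le 2^{40}$. The plan is to prove both by induction along the O'Brien tree, exploiting that each child $H$ of a group $G$ of $p$-class $c$ sits in a central extension $1\to A\to H\to G\to 1$ with $A=P_c(H)$ elementary abelian and central in $H$ (this is exactly the structure produced by O'Brien's algorithm, since $P_{c+1}(H)=1$ forces both $[H,P_c(H)]=1$ and $P_c(H)^2=1$). Thus $H^8$ surjects onto $G^8$ with kernel inside the central subgroup $A$, and $[H^8,H^8]$ maps to $[G^8,G^8]$; controlling how the two invariants $G\mapsto G/G^8$ and $G\mapsto[G^8,G^8]$ propagate across a single central step is the engine of the whole argument.

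For the index bound, first note the soft uniform statement: $G/G^8$ is a $4$-generator group of exponent dividing $8$, so by the positive solution of the restricted Burnside problem (Zelmanov) its order is bounded by $|R(4,8)|$, the order of the largest finite $4$-generator group of exponent $8$, independently of $G$; this already yields a uniformly finite index, albeit a bound far larger than $2^{40}$. To obtain the sharp constant I would track the actual isomorphism type of the verbal quotient $G/G^8$ along each of the $128$ lineages and show it stabilizes. Here Nover's criterion (Proposition 7.1), applied with $N=G$ and the word $V(x)=x^8$ (so $V(N)=G^8$), is the right tool: if $[G/P_c : V(G/P_c)]$ equals $[G/P_{c+1}:V(G/P_{c+1})]$ at some class $c$, then $G/G^8$ is already determined at class $c$ and hence is constant for all further descendants along that lineage. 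One then verifies, for each lineage, that stabilization occurs at a group of order at most $2^{40}$.

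The abelianness is the hard half. In the central step $1\to A\to H\to G\to 1$, assuming inductively that $[G^8,G^8]=1$, we obtain $[H^8,H^8]\subseteq A$, so $[H^8,H^8]$ is a central, exponent-$2$ obstruction that need not vanish a priori. I would attack it in two complementary ways. Group-theoretically, expand $[x^8,y^8]$ for the generators $x,y$ (such as the $a,b,c,d$ of the presentations in Conjecture 7.2) by Hall--Petrescu collection; modulo $A$ every term is a product of $8$th powers and of commutators of weight high enough to be absorbed, and the claim is that in these lineages the residual central terms cancel. Structurally, one applies Nover's criterion again, now with $N=G^8$ and $V$ the commutator word, for $c$ large enough that $P_c(G)\le G^8$, reducing the vanishing of $[G^8,G^8]$ to a finite check: once $[(G/P_c)^8,(G/P_c)^8]=1$ and the associated index has stabilized, $[G^8,G^8]=1$ for all descendants of that stage.

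The main obstacle is uniformity over the whole tree rather than along a single path. Nover's criterion controls one fixed pro-$p$ group and one fixed normal subgroup at a time, i.e. one infinite path; below the $128$ groups there are infinitely many paths, and a genuine proof requires that the stabilization of both $G/G^8$ and $[G^8,G^8]$ be uniform across all of them. Pinning down the central obstruction in the abelian step for \emph{every} central extension arising in the tree --- equivalently, extending Nover's lemma beyond the $P_c(G)\le N$ regime it presently requires, or showing that each lineage is powerful-by-finite with a uniformly bounded finite part --- is precisely the gap that keeps this a conjecture supported by the millions of verified descendants. I would finally record the payoff: since every open subgroup $H\le G_K$ has $H^{ab}\cong Cl_2(L)$ finite, an affirmative resolution makes the open subgroup $G_K^8$ equal to its own (finite) abelianization, forcing $G_K^8$, and hence $G_K$, to be finite.
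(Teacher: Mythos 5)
First, a point of calibration: the statement you were asked to prove is Conjecture 9.1 of the paper. The paper does \emph{not} prove it; it is supported purely by computation on millions of descendants (e.g.\ the descendant of order $2^{81}$ with $G^8$ abelian of index $2^{35}$), and its only role is as the hypothesis of Corollary 9.2. So there is no proof in the paper to match, and your own closing concession --- that uniformity over the infinite tree is ``precisely the gap that keeps this a conjecture'' --- is accurate. Several of your ingredients are sound and do match the paper's remarks: children of a class-$c$ group are central extensions with elementary abelian kernel; $G/G^8$ is a $4$-generator group of exponent dividing $8$, hence uniformly finite by Zelmanov's solution of the restricted Burnside problem (though that bound is astronomically larger than $2^{40}$); Nover's criterion (Proposition 7.1) with $N=G$ and $V(x)=x^8$ is exactly the mechanism the paper cites in Section 9 for propagating a stabilized index to all further descendants; and since $G^8$ is verbal, hence normal, the paper's caveat about non-normal $N$ does not apply here. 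The tree below the $128$ candidates is genuinely infinite (Example 9.3 exhibits an infinite descendant), stabilization depths are not uniformly bounded, and the branching is unbounded, so the node-by-node verification you describe cannot terminate --- this is the gap you correctly identify.

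However, there is a concrete error in your sketch beyond the gap you acknowledge: the second application of Nover's criterion, with $N=G^8$ and $V$ the commutator word, cannot deliver abelianness even in principle. The lemma's conclusion identifies the verbal \emph{quotient} $N/V(N)$ with its value at class $c$; it never yields $V(N)=1$. Worse, its hypothesis is incompatible with your inductive setup: if the image of $G^8$ in $Q_c(G)$ is abelian, then $[G/P_c(G) : V(N/P_c(G))] = |Q_c(G)|$, and since the derived subgroup of $N/P_{c+1}(G)$ maps to the (trivial) derived subgroup of $N/P_c(G)$, equality of the two indices forces that derived subgroup to be \emph{all} of $P_c(G)/P_{c+1}(G)$. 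In other words, the stabilization hypothesis for the commutator word can hold only when the child's eighth-power subgroup is non-abelian --- the opposite of what you are trying to propagate. The only valid route to abelianness of $G^8$ for a pro-$2$ group is the inverse-limit observation that if every finite quotient $Q_c(G)$ has $Q_c(G)^8$ abelian then so is $G^8$ (since $\bigcap_c P_c(G)=1$); but that presupposes the conjecture for all finite descendants, which is exactly the infinite family of checks at issue. The Hall--Petrescu cancellation is likewise gestured at rather than carried out. So your write-up is an honest heuristic consistent with the paper's evidence, but both halves of the statement --- the bound $2^{40}$ and the abelianness --- remain exactly as conjectural as the paper leaves them.
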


For instance, the group $G$ of order $2^{81}$, mentioned above, has $G^8$ of index $2^{35}$ and abelian of rank $33$. Nover's lemma can be used to show that if the index of the subgroup generated by $8$th powers stabilizes when moving from a group to a child of that group, then the index is the same for all further descendants. The main consequence of the above conjecture is the following.

\begin{corollary}
Conjecture 9.1 implies that $G_K$ is finite.
\end{corollary}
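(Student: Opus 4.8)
The plan is to show that Conjecture 9.1 forces the descending chain of $p$-class quotients $Q_c(G_K)$ to stabilize, which (by O'Brien's tree structure) means $G_K$ equals one of its finite quotients. First I would recall the setup from the excerpt: $G_K$ is a finitely generated pro-$2$ group lying somewhere in the O'Brien tree of $4$-generator $2$-groups, and it is finite precisely when the path from the root $(\Z/2)^4$ through the quotients $Q_c(G_K)$ terminates at a vertex rather than forming an infinite end. Since $Q_1(G_K)\cong(\Z/2)^4$ has $p$-class $1$ and we have explicitly computed $Q_2(G_K)$ and the two candidates for $Q_3(G_K)$, each such quotient $Q_c(G_K)$ is a descendant of one of the $128$ candidates for $Q_3(G_K)$. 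Hence Conjecture 9.1 applies to every finite quotient $Q_c(G_K)$, giving $[Q_c(G_K) : Q_c(G_K)^8] \le 2^{40}$ with $Q_c(G_K)^8$ abelian, uniformly in $c$.

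The key step is to deduce a uniform bound on the orders $|Q_c(G_K)|$ from the uniform bound on the index of the $8$th-power subgroup together with abelianness of that subgroup. The idea is that for a $4$-generator pro-$2$ group $G$, the quotient $G/G^8$ has exponent dividing $8$ and is $4$-generated, so by a Burnside-type finiteness argument (the restricted Burnside problem for exponent $8$ and $4$ generators, a known finite value) the order of $G/G^8$ is bounded by an absolute constant; combined with Conjecture 9.1 this already bounds $[G:G^8]\le 2^{40}$ independently of $c$. Meanwhile $G^8$ is abelian and $4$-generated as a subgroup of a $4$-generated group its rank is bounded, but as an abelian pro-$2$ group its order need not be bounded a priori. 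The resolution is that $G_K$ is the $2$-tower group of a fixed number field $K$, so $G_K^{ab}\cong(\Z/2)^4$ is finite; more to the point, an infinite abelian subgroup of finite index would force $G_K$ itself to be infinite with an abelian subgroup of finite index, and I would argue that such a pro-$2$ group cannot be the $2$-tower group of an imaginary quadratic field with the relator rank and Golod--Shafarevich data already established. Concretely, I would apply Conjecture 9.1 directly to $G_K$: it gives $G_K^8$ abelian of index at most $2^{40}$, so $G_K$ is abelian-by-(finite of bounded order). A finitely generated pro-$2$ group that is abelian-by-finite is either finite or has an infinite abelian quotient; but $G_K$ is a $4$-generator $5$-relator group whose derived structure (seen through the computed $Q_c$) precludes an infinite abelianization since $G_K^{ab}$ is finite.

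More carefully, the clean argument is: since $G_K^8 \lhd G_K$ has finite index and is abelian, $G_K$ is virtually abelian, hence its finite quotients $Q_c(G_K)$ all contain the bounded-index abelian subgroup $Q_c(G_K)^8$; if $G_K$ were infinite then $G_K^8$ would be infinite, and being abelian and topologically finitely generated, $G_K^8\cong \Z_2^r\times(\text{finite})$ for some $r\ge 1$. But then $G_K$ would surject onto $\Z_2$ up to finite index, contradicting that every open subgroup $H\le G_K$ has finite abelianization (indeed $H$ is the $2$-tower group of a number field $L/K$, whose $2$-class group $H^{ab}$ is finite, as tabulated in Propositions 5.1 and 5.2). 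The contradiction shows $G_K^8$ is finite, whence $G_K$ is finite.

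The main obstacle I expect is pinning down the step that rules out $G_K^8 \cong \Z_2^r$ with $r\ge 1$ rigorously, i.e. excluding an infinite virtually abelian $G_K$. The soft statement ``every open subgroup has finite abelianization hence $G_K$ is finite'' is exactly the assertion that $G_K$ has finite $2$-class field tower, which is what we are trying to prove, so one must avoid circularity. The correct non-circular route is purely group-theoretic: a finitely generated virtually abelian pro-$2$ group with infinite abelian part $\Z_2^r$ necessarily has an open subgroup $U$ with $U^{ab}$ infinite (take $U=G_K^8$ itself, which is its own abelianization and infinite). Since $U=G_K^8$ is open in $G_K$, it corresponds to a finite unramified extension $L/K$ and $U^{ab}\cong Cl_2(L)$ must be finite, an immediate contradiction with $U$ infinite abelian. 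Thus the only genuinely delicate point is verifying that an open subgroup's abelianization really is its $2$-class group (Class Field Theory, already invoked in Section $6$) and that $G_K^8$ being open is legitimate, which follows because $[G_K:G_K^8]\le 2^{40}<\infty$ by Conjecture 9.1 applied to $G_K$.
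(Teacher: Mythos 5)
Your final ``clean argument'' is correct and is essentially the paper's own proof: Conjecture 9.1 applied to the quotients $Q_c(G_K)$ bounds (hence stabilizes) the index of the $8$th-power subgroup, so if $G_K$ were infinite then $G_K^8$ would be an infinite abelian open subgroup, whose abelianization (itself) is the $2$-class group of the corresponding finite unramified extension of $K$ --- contradicting finiteness of class groups. The Burnside-type detour and the circularity worry in your middle paragraphs are unnecessary (finiteness of class groups of number fields is unconditional, and ``every open subgroup has finite abelianization'' is weaker than, not equivalent to, finiteness of $G_K$), but the core argument you settle on matches the paper's.
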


\begin{proof}
Suppose $G_K$ is infinite. Then by Conjecture 9.1, for the sequence of its $2$-class quotients, the index of the subgroup generated by $8$th powers has to stabilize and the subgroup is abelian. In that case $G_K$ has an infinite abelian subgroup of finite index. The corresponding number field has infinite $2$-class group, which is impossible.
\end{proof}

In fact the $128$ candidates for $Q_3(G_K)$ fall into two classes, the first of which has considerably simpler descendants (for instance, this includes all moribund cases) and for any of these it appears that $G^4$ is abelian of index at most $2^{11}$. This is not true for all descendants of the $2$ candidates in Conjecture 7.2, both of which fall into the second class. It is, however, enlightening to investigate descendants of groups in the first class. For instance, we found the following infinite descendant of the 1st of the $128$ candidates.

\begin{example}
The $1$st of the $128$ groups (as listed by Magma) has an infinite descendant. This pro-$2$ group has presentation
\begin{equation}
\begin{aligned}
\langle a,b,c,d \mid & a^{-2}(d,c),b^{-2}(d,a)(d,b,b)(b,a,a,c),\\
& c^{-2}(b,a)(d,b,b)(b,a,a,c),d^{-2}(c,a)(d,a)(d,b),(b,c) \rangle
\end{aligned}
\end{equation}
\end{example}

For this group $G$, the subgroup $G^4$ is abelian of index $2^{11}$. Taking the center of the centralizer of $G^4$ yields an abelian normal subgroup $A$ of index $2^8$ with $A \cong [2,4,4,8] \times (\Z_2)^4$. It follows that $G$ has a free abelian normal subgroup of finite index and rank $4$. In fact $G$ is a $2$-adic pre-space group, as studied in the solution of the coclass conjecture [13],[22].

\vskip.25in

\centerline{\bf{Acknowledgements}}\rm

The authors thank Charles Leedham-Green for his comments on the paper. The first author was supported by Simons Foundation Award MSN-179747. The second author was supported by National Science Foundation grant DMS-1301690.

\newpage

\centerline{\bf{References}}\rm
\vskip.25in

1. E.Benjamin, On imaginary quadratic number fields with $2$-class group of rank $4$ and infinite $2$-class field tower, Pacific J. Math. 201 (2001), 257-266.

2. E.Benjamin, On a question of Martinet concerning the $2$-class field tower of imaginary quadratic number fields, Ann. Sci. Math. Quebec 26 (1) (2002), 1-13.

3. E.Benjamin, On the $2$-class field tower conjecture for imaginary quadratic number fields with $2$-class group of rank $4$, J. Number Theory 154 (2015), 118-143.

4. W.Bosma, J.J.Cannon, and C.Playoust, The Magma algebra system. I. The user language, J.Symbolic Comput. 24 (1997), 235-265.

5. N.Boston and H.Nover, Computing pro-$p$ Galois groups, Lecture Notes in Computer Science 4076 (2006), ANTS VII, 1-10.

6. M.R.Bush, Computation of the Galois groups associated to the 2-class towers of some quadratic fields, J. Number Theory 100 (2003), 313-325.

7. J.-M.Fontaine and B.C.Mazur, Geometric Galois representations, Elliptic curves, modular forms, \& Fermat's last theorem (Hong Kong, 1993), Series in Number Theory, 1, Int. Press, Cambridge, MA, 41-78.

8. A.Fr\"ohlich, Central extensions, Galois groups, and ideal class groups of number fields, Contemporary Mathematics 24, AMS (1983).

9. E.S.Golod and I.R.Shafarevich, On the class field tower (Russian), Izv. Akad. Nauk SSSR Ser. Mat. 28 (1964), 261-272; English translation in AMS Transl. 48 (1965), 91-102.

10. F.Hajir and C.Maire, Tamely ramified towers and discriminant bounds for number fields. II., J. Symbolic Comput. 33 (2002), 415-423.

11. G.Havas, M.F.Newman, and E.A.O'Brien, On the efficiency of some finite groups, Comm. Algebra 32 (2004), 649-656.

12. H.Koch, Galois theory of $p$-extensions, Springer Monographs in Mathematics. Springer-Verlag, Berlin (2002).

13. C.R.Leedham-Green, The structure of finite $p$-groups, J. London Math. Soc. 50 (1) (1994), 49-67.

14. A.Lubotzky and A.Mann, Powerful $p$-groups. II. $p$-adic analytic groups, J. Algebra 105 (2) (1987), 506-515.

15. J.Martinet, Tours de corps de classes et estimations de discriminants (French), Invent Math 44 (1978), 65-73.

16. A.Mouhib, Infinite Hilbert $2$-class field tower of quadratic number fields, Acta Arith. 145 (3) (2010), 267-272.

17. H.Nover, Computation of Galois groups associated to the $2$-class towers of some imaginary quadratic fields with $2$-class group $C_2 \times C_2 \times C_2$, J. Number Theory 129 (1) (2009), 231-245.

18. E.A.O'Brien, The $p$-group generation algorithm, J. Symbolic Comput. 9 (1990), 677-698.

19. A.Odlyzko, Lower bounds for discriminants of number fields, Acta Arith.  29 (1976), 275-297.

20. D.J.S.Robinson,  A course in the theory of groups, Springer-Verlag, Berlin (1982).

21. I.R.Shafarevich, Extensions with prescribed ramification points (Russian), IHES Publ. 18 (1963), 71-95; English translation in I.R.Shafarevich, Collected Mathematical Papers, Springer, Berlin (1989).

22. A.Shalev, The structure of finite p-groups: effective proof of the coclass conjectures, Invent. Math. 115 (2) (1994), 315-345.

23. Y.Sueyoshi, Infinite $2$-class field towers of some imaginary quadratic number fields, Acta Arith. 113 (3) (2004), 251-257.

24. Y.Sueyoshi, On $2$-class field towers of imaginary quadratic number fields, Far East J. Math. Sci. (FJMS) 34 (3) (2009), 329-339.

25. Y.Sueyoshi, On the infinitude of $2$-class field towers of some imaginary quadratic number fields, Far East J. Math. Sci. (FJMS) 42 (2) (2010), 175-187.

26. V.Y.Wang, On Hilbert $2$-class fields and $2$-towers of imaginary quadratic number fields, https://arxiv.org/abs/1508.06552.

\end{document}